\documentclass[12pt]{amsart}

\usepackage{amsmath,amssymb,amsbsy,amsfonts,amsthm,latexsym,
                        amsopn,amstext,amsxtra,euscript,amscd,mathrsfs,color,bm,cite}

\usepackage{float}
\usepackage[english]{babel}
\usepackage{mathtools}
\usepackage{todonotes}
\usepackage{url}
\usepackage[colorlinks,linkcolor=blue,anchorcolor=blue,citecolor=blue,backref=page]{hyperref}

\usepackage[norefs,nocites]{refcheck}

\newtheorem{theorem}{Theorem}
\newtheorem{lemma}[theorem]{Lemma}

\numberwithin{equation}{section}
\numberwithin{theorem}{section}
\numberwithin{table}{section}

\numberwithin{figure}{section}



\newfont{\teneufm}{eufm10}
\newfont{\seveneufm}{eufm7}
\newfont{\fiveeufm}{eufm5}
%
%
\newfam\eufmfam
                \textfont\eufmfam=\teneufm \scriptfont\eufmfam=\seveneufm
                \scriptscriptfont\eufmfam=\fiveeufm
%
%

%




\def\bbbc{{\mathchoice {\setbox0=\hbox{$\displaystyle\rm C$}\hbox{\hbox
to0pt{\kern0.4\wd0\vrule height0.9\ht0\hss}\box0}}
{\setbox0=\hbox{$\textstyle\rm C$}\hbox{\hbox
to0pt{\kern0.4\wd0\vrule height0.9\ht0\hss}\box0}}
{\setbox0=\hbox{$\scriptstyle\rm C$}\hbox{\hbox
to0pt{\kern0.4\wd0\vrule height0.9\ht0\hss}\box0}}
{\setbox0=\hbox{$\scriptscriptstyle\rm C$}\hbox{\hbox
to0pt{\kern0.4\wd0\vrule height0.9\ht0\hss}\box0}}}}
\def\bbbq{{\mathchoice {\setbox0=\hbox{$\displaystyle\rm
Q$}\hbox{\raise 0.15\ht0\hbox to0pt{\kern0.4\wd0\vrule
height0.8\ht0\hss}\box0}} {\setbox0=\hbox{$\textstyle\rm
Q$}\hbox{\raise 0.15\ht0\hbox to0pt{\kern0.4\wd0\vrule
height0.8\ht0\hss}\box0}} {\setbox0=\hbox{$\scriptstyle\rm
Q$}\hbox{\raise 0.15\ht0\hbox to0pt{\kern0.4\wd0\vrule
height0.7\ht0\hss}\box0}} {\setbox0=\hbox{$\scriptscriptstyle\rm
Q$}\hbox{\raise 0.15\ht0\hbox to0pt{\kern0.4\wd0\vrule
height0.7\ht0\hss}\box0}}}}
\def\bbbt{{\mathchoice {\setbox0=\hbox{$\displaystyle\rm
T$}\hbox{\hbox to0pt{\kern0.3\wd0\vrule height0.9\ht0\hss}\box0}}
{\setbox0=\hbox{$\textstyle\rm T$}\hbox{\hbox
to0pt{\kern0.3\wd0\vrule height0.9\ht0\hss}\box0}}
{\setbox0=\hbox{$\scriptstyle\rm T$}\hbox{\hbox
to0pt{\kern0.3\wd0\vrule height0.9\ht0\hss}\box0}}
{\setbox0=\hbox{$\scriptscriptstyle\rm T$}\hbox{\hbox
to0pt{\kern0.3\wd0\vrule height0.9\ht0\hss}\box0}}}}
\def\bbbs{{\mathchoice
{\setbox0=\hbox{$\displaystyle     \rm S$}\hbox{\raise0.5\ht0\hbox
to0pt{\kern0.35\wd0\vrule height0.45\ht0\hss}\hbox
to0pt{\kern0.55\wd0\vrule height0.5\ht0\hss}\box0}}
{\setbox0=\hbox{$\textstyle        \rm S$}\hbox{\raise0.5\ht0\hbox
to0pt{\kern0.35\wd0\vrule height0.45\ht0\hss}\hbox
to0pt{\kern0.55\wd0\vrule height0.5\ht0\hss}\box0}}
{\setbox0=\hbox{$\scriptstyle      \rm S$}\hbox{\raise0.5\ht0\hbox
to0pt{\kern0.35\wd0\vrule height0.45\ht0\hss}\raise0.05\ht0\hbox
to0pt{\kern0.5\wd0\vrule height0.45\ht0\hss}\box0}}
{\setbox0=\hbox{$\scriptscriptstyle\rm S$}\hbox{\raise0.5\ht0\hbox
to0pt{\kern0.4\wd0\vrule height0.45\ht0\hss}\raise0.05\ht0\hbox
to0pt{\kern0.55\wd0\vrule height0.45\ht0\hss}\box0}}}}
\def\bbbz{{\mathchoice {\hbox{$\sf\textstyle Z\kern-0.4em Z$}}
{\hbox{$\sf\textstyle Z\kern-0.4em Z$}} {\hbox{$\sf\scriptstyle
Z\kern-0.3em Z$}} {\hbox{$\sf\scriptscriptstyle Z\kern-0.2em
Z$}}}}

\def\squareforqed{\hbox{\rlap{$\sqcap$}$\sqcup$}}
\def\qed{\ifmmode\squareforqed\else{\unskip\nobreak\hfil
\penalty50\hskip1em\null\nobreak\hfil\squareforqed
\parfillskip=0pt\finalhyphendemerits=0\endgraf}\fi}


\def\cH{{\mathcal H}}

\def\cJ{{\mathcal J}}
\def\cK{{\mathcal K}}

\def\cM{{\mathcal M}}
\def\cN{{\mathcal N}}

\def\cU{{\mathcal U}}

\def\cX{{\mathcal X}}

\def\le{\leqslant}

\def\ge{\geqslant}




\newcommand{\ignore}[1]{}

\def\e{\mathbf{e}}



\hyphenation{re-pub-lished}

\def \F{\mathbb{F}}

\def \Z{\mathbb{Z}}

\def \Z{\mathbb{Z}}

\def \balpha{\bm{\alpha}}
\def \bbeta{\bm{\beta}}

\def\mand{\qquad\mbox{and}\qquad}

\def\\{\cr}
\def\({\left(}
\def\){\right)}
\def\fl#1{\left\lfloor#1\right\rfloor}

\def\e{\mathbf{e}}

\def\eps{\varepsilon}

\begin{document}

\title[Congruences and exponential sums]
{On some congruences and exponential sums}

\author{Moubariz~Z.~Garaev}
\address{Centro  de Ciencias Matem{\'a}ticas,  Universidad Nacional Aut\'onoma de
M{\'e}\-xico, C.P. 58089, Morelia, Michoac{\'a}n, M{\'e}xico}
\email{garaev@matmor.unam.mx}

\author[I. E. Shparlinski] {Igor E. Shparlinski}
\address{School of Mathematics and Statistics, University of New South Wales, Sydney, NSW 2052, Australia}
\email{igor.shparlinski@unsw.edu.au}

\begin{abstract}  
Let $\varepsilon>0$ be a fixed small constant, $\F_p$ be the finite  field of $p$ elements for  prime  $p$.  
We consider additive and multiplicative problems in $\F_p$ that  
involve intervals and arbitrary sets. Representative examples of our results are as follows. 
Let $\cM$ be an arbitrary subset of $\F_p$.
If $\#\cM >p^{1/3+\varepsilon}$ and $H\ge p^{2/3}$  or if  $\#\cM >p^{3/5+\varepsilon}$ and 
$H\ge p^{3/5+\varepsilon}$ then all, but $O(p^{1-\delta})$ elements
of $\F_p$ can be represented in the form $hm$ with $h\in [1, H]$ and $m\in \cM$, 
where $\delta> 0$ depends only on $\varepsilon$. 
Furthermore, let $\cX$ be an arbitrary interval of length $H$ and $s$ be a  fixed positive integer. If 
$$
H>  p^{17/35+\varepsilon}, \quad \#\cM > p^{17/35+\varepsilon}.
$$  
then the number $T_6(\lambda)$ of solutions of the congruence
\begin{align*}
\frac{m_1}{x_1^s}+ \frac{m_2}{x_2^s}+ \frac{m_3}{x_3^s}&+\frac{m_4}{x_4^s}+ \frac{m_5}{x_5^s}+\frac{m_6}{x_6^s}
\equiv \lambda\mod  p, \\
  m_i\in \cM, & \ x_i  \in \cX, \quad i =1, \ldots, 6,
\end{align*}
satisfies
$$
T_6(\lambda)=\frac{H^6(\#\cM)^6}{p}\(1+O(p^{-\delta})\),
$$
where  $\delta> 0$ depends only on $s$ and $\varepsilon$. 
\end{abstract}

\keywords{products of intervals and sets, congruences, exponential sums, character sums}
\subjclass[2020]{11D79, 11L07, 11L26}	

\maketitle

\tableofcontents
\section{Introduction}

\subsection{Motivation}

Here we consider some question related to congruences modulo 
 a sufficiently large prime number $p$, which involve products and inverses of 
 elements from short intervals or arbitrary sets. In particular as one of the applications 
 we improve a result on cardinlaities of product of a set and an interval from~\cite{GarKar2}  and a bound of exponential sums from~\cite{BagSh}. 

In what follows, $p$ is a large prime number,  $\F_p$ is the field of residue classes modulo $p$.  By $\F_p^*$ we denote
the set of non-zero elements of $\F_p$.  
Let 
\begin{equation}
\label{eq: init int H}
\cH = \{1, \ldots, H\} ,
\end{equation}
 be an initial interval of $H\ge 1$ consecutive integers and
let 
\begin{equation}
\label{eq: set M}
\cM \subseteq \F_p^* \mand  \# \cM = M
\end{equation}
 be a subset of $\F_p^*$ of cardinality $M$.

We are interested in a natural question of investigating the size of set of ratios 
\begin{equation}
\label{eq: ratio set}
\cM/\cH =  \{m/h:~h \in \cH, \ m \in \cM\}.
\end{equation}
considered as subset of $\F_p$ and in particular investigating the conditions of the sizes
$H$ and $M$ which guarantee that all but $o(p)$ elements of $\F_p$ are represented.

In fact, since $\cM$ is an arbitrary set, we see that this  
is equivalent to the question about the size  values set of the product set
\begin{equation}
\label{eq:  prod  set}
\cH \cM = \{hm:~h \in \cH, \ m \in \cM\}
\end{equation}
which is slightly more convenient to study. 

This question   of estimating the size of the ratio set~\eqref{eq: ratio set} or, alternatively of  
the product set~\eqref{eq: prod set}, 
had first been addressed by Garaev and Karatsuba~\cite{GarKar}. In particular,
a special case of~\cite[Theorem~2]{GarKar} implies that if
$H= M > p^{2/3-1/192+ \varepsilon}$ for some fixed $\varepsilon >0$, then
$\#\(\cH \cM\) = p + o(p)$.

Furthermore, if $\cM= \cH$ then by~\cite[Theorem~2]{GarKar}
the same holds for $H= M > p^{5/8+ \varepsilon}$. This condition has been relaxed in~\cite{GarKar2} to the optimal one $H= M > p^{1/2+ \varepsilon}$, 
see also~\cite{Shp1} for many other results on this and related topics.

This problem  has also attracted attention of Bourgain. It has been mentioned in~\cite[Page~480]{CillGar}
that in the case of arbitrary sets $\cM$ Bourgain (unpublished) improved the exponent $2/3-1/192$ to $5/8$.

Furthermore, we also consider a Waring-type problem with elements of the 
ration set~\eqref{eq: ratio set}. Namely for an integer $s \ge 1$ we study representations 
of all elements of $\F_p$ by sums of a small number of powers $(m/h)^s$, 
with $h \in \cH$, $m \in \cM$. In fact, we study this question in a broader generality 
for fractions of the form $m/x^s$ where the numerator is from an arbitrary set (and is
not necessary a perfect $s$-th power) and more importantly where $x$ runs though an 
arbitrary interval, not necessary at the origin. 

For this, we first  consider exponential sums with products $m/x^{s}$, with some fixed integer $s >0$, where $m$ is from an arbitrary set 
 $\cM \subseteq \F_p$ and $x$ is from an arbitrary interval  
 \begin{equation}
\label{eq: int X}
\cX = L + \cH = \{L+1, \ldots, L+H\}\subseteq  \{1, \ldots, p-1\}
\end{equation}
of $H$ consecutive integers, 
 which this time is not necessary at the origin (and such that $0 \notin \cX$). 

\subsection{New results}
Here we get the following improvement of the aforementioned results on the size of the product set $\cH\cM$. 

\begin{theorem}
\label{thm: HM}  Let  $\varepsilon >0$ be fixed.
Assume that  cardinalities $H$ and $M$ of an interval $\cH$ as in~\eqref{eq: init int H} and   
a set $\cM$ as in~\eqref{eq: set M} satisfy
$$
H\ge p^{2/3},  \qquad HM \ge p^{1+\varepsilon} 
$$
or
$$
H<p^{2/3},  \qquad M \ge p^{1/3}, \qquad  HM^{1/4} \ge p^{3/4+\varepsilon} .
$$
Then there is some $\eta>0$ which depends only on $\varepsilon$ such that   
$$
\#\(\cH \cM\) = p + O\( p^{1-\eta}\).
$$
\end{theorem}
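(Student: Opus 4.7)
The plan is the classical second-moment/Parseval approach, reducing to a bound on the bilinear exponential sum
$$T(a) = \sum_{h \in \cH}\sum_{m \in \cM} e_p(ahm), \qquad e_p(x) := \exp(2\pi i x/p).$$
Setting $r(\lambda) = \#\{(h,m)\in\cH\times\cM : hm\equiv\lambda\pmod p\}$, orthogonality of additive characters gives $r(\lambda) = HM/p + \frac{1}{p}\sum_{a\ne 0}T(a)e_p(-a\lambda)$, and Parseval yields
$$\sum_{\lambda}\left|r(\lambda)-\frac{HM}{p}\right|^{2} = \frac{1}{p}\sum_{a\ne 0}|T(a)|^{2} = E_\ast(\cH,\cM) - \frac{(HM)^{2}}{p},$$
where $E_\ast(\cH,\cM) = \#\{(h_1,m_1,h_2,m_2) : h_1 m_1 \equiv h_2 m_2 \pmod p\}$. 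Since each $\lambda\notin\cH\cM$ contributes at least $(HM/p)^{2}$ to the left-hand side,
$$\#(\F_p\setminus\cH\cM) \le \frac{p^{2}}{(HM)^{2}}\left(E_\ast(\cH,\cM) - \frac{(HM)^{2}}{p}\right),$$
and the theorem reduces to establishing, for some $\eta=\eta(\varepsilon)>0$, the energy estimate $E_\ast(\cH,\cM) \le \frac{(HM)^{2}}{p}\bigl(1+p^{-\eta}\bigr)$.

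\textbf{Case analysis.}  Writing $\mu = m_2/m_1$, decompose
$E_\ast(\cH,\cM) = HM + \sum_{\mu\ne 1} J_{\cM/\cM}(\mu)\,N(\mu)$,
where $J_{\cM/\cM}(\mu) = \#\{(m_1,m_2)\in\cM^{2}: m_2 = \mu m_1\}$ and $N(\mu) = \#\{(h_1,h_2)\in\cH^{2}: h_1\equiv \mu h_2\pmod p\}$. For Case~1 ($H\ge p^{2/3}$, $HM\ge p^{1+\varepsilon}$), I would use the Fourier expansion $N(\mu)-H^{2}/p = \frac{1}{p}\sum_{a\ne 0}\overline{S(a)}\,S(a\mu)$ with $S(c)=\sum_{h=1}^{H}e_p(ch)$, insert it into $\sum_\mu J_{\cM/\cM}(\mu)(N(\mu)-H^{2}/p)$, and apply Cauchy--Schwarz together with the Parseval identity on the ratio set $\cM/\cM$; the threshold $H\ge p^{2/3}$ is exactly what is needed to beat the resulting discrepancy error against the main term $H^{2}M^{2}/p$, while $HM \ge p^{1+\varepsilon}$ absorbs the diagonal $HM$. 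For Case~2 ($H<p^{2/3}$, $M\ge p^{1/3}$, $HM^{1/4}\ge p^{3/4+\varepsilon}$) the shape $HM^{1/4}\ge p^{3/4+\varepsilon}$ — equivalently $H^{4}M\ge p^{3+4\varepsilon}$ — is the signature of a fourth-moment argument: one invokes the bilinear exponential sum bound of~\cite{BagSh}, which furnishes a non-trivial estimate for the fourth moment of $T(a)$ (equivalently for the multiplicative-interval energy $\sum_\mu N(\mu)^{4}$), and applies H\"older's inequality to $\sum_\mu J_{\cM/\cM}(\mu) N(\mu)$ with exponents that pit this high moment of $N$ against the trivial $L^{1}$ moment $\sum_\mu J_{\cM/\cM}(\mu) = M(M-1)$ and the $L^{2}$ bound $\sum_\mu J_{\cM/\cM}(\mu)^{2} \le M^{3}$.

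\textbf{Main obstacle.}  The principal difficulty lies in Case~2: the H\"older exponents must be calibrated so that the resulting powers of $H$ and $M$ exactly match the asymmetric hypothesis $HM^{1/4}\ge p^{3/4+\varepsilon}$, and the auxiliary condition $M\ge p^{1/3}$ appears precisely at the point where this split is balanced. In the intermediate range $p^{1/2}<H<p^{2/3}$, the wrap-around of $\cH$ modulo $p$ contributes non-trivially to the higher moments of $N(\mu)$, and controlling this wrap-around while extracting the saving from the fourth-moment estimate of~\cite{BagSh} is, I expect, the technical heart of the argument. By contrast, once the correct H\"older split and the correct invocation of~\cite{BagSh} are in place, the passage back from the energy bound to $\#(\F_p\setminus\cH\cM)\le p^{1-\eta}$ via the Parseval step is routine.
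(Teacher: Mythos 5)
Your Parseval reduction is fine as far as it goes, but it reduces the theorem to a statement that neither you nor the existing literature can deliver: you need the \emph{asymptotic} $E_\ast(\cH,\cM)\le \frac{(HM)^2}{p}\bigl(1+p^{-\eta}\bigr)$, with leading constant exactly $1$ up to a power saving, whereas the available input (Banks--Shparlinski, which is Lemma~\ref{lem: BS-Energy} of the paper) is only an \emph{upper bound} $E_\ast\ll H^2M^2p^{-1}+\dots$ with an unspecified implicit constant and $p^{o(1)}$ losses. Plugging a bound of the form $E_\ast\le C\,H^2M^2p^{-1+o(1)}$ into your counting inequality gives $\#(\F_p\setminus\cH\cM)\le (Cp^{o(1)}-1)p$, which is vacuous. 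Your sketches for upgrading this to an asymptotic do not work: in Case~1, writing $E_\ast-\frac{H^2M^2}{p-1}=\frac{1}{p-1}\sum_{\chi\ne\chi_0}|S_\cH(\chi)|^2|S_\cM(\chi)|^2$ and using Burgess for $\cH$ together with Parseval over $\cM$ needs $HM\ge p^{11/8+\eta}$, and the Cauchy--Schwarz/fourth-moment variant needs $M\ge p^{1+\eta}$, both far beyond $HM\ge p^{1+\varepsilon}$; in Case~2 you invoke~\cite{BagSh}, which concerns sums with $mx^{-s}$ and is irrelevant here (the relevant reference is~\cite{BaSh}), and in any case a H\"older argument against higher moments of $N(\mu)$ again only produces order-of-magnitude upper bounds, not the constant-$1$ asymptotic your reduction requires. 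There is no known route to the energy asymptotic for an \emph{arbitrary} set $\cM$ under the stated hypotheses, and the theorem is designed precisely to sidestep this.

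The paper's proof avoids the energy of $\cH\cM$ altogether. It picks $r$ with $1/r<\varepsilon/6$, sets $J=\fl{p^{1/r}}$, $K=\fl{H/J}$, and uses $\cJ\cK\cM\subseteq\cH\cM$; the quantity it controls asymptotically is the \emph{triple} energy $R(\cJ,\cK,\cM)$ of Lemma~\ref{lem:TripleProd}. There, after expanding in multiplicative characters, one pulls out a small power $\gamma=2/r$ of $|S_\cK||S_\cM|$ bounded via Burgess (for $S_\cK$) and trivially (for $S_\cM$), and applies H\"older between $|S_\cJ|^{2}$ and the remaining factor: the $2r$-th moment of $S_\cJ$ is controlled by the divisor bound because $J^r<p$ (no wrap-around), and the remaining second moment is exactly the Banks--Shparlinski energy $J(\cK,\cM)$, where the upper bound with implicit constants is harmless because it only enters the error term. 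This yields $R(\cJ,\cK,\cM)=\frac{J^2K^2M^2}{p-1}\bigl(1+O(p^{-\eta})\bigr)$, and then Cauchy's inequality gives $\#(\cJ\cK\cM)\ge J^2K^2M^2/R = p+O(p^{1-\eta})$. The introduction of the auxiliary short interval $\cJ$, which supplies the extra H\"older factor whose high moment is accessible by elementary divisor-counting, is the key idea your proposal is missing; without it the second-moment scheme you outline cannot reach the stated thresholds.
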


It is easy to see from the proof of Theorem~\ref{thm: HM} that one can take $\eta = c \varepsilon$ 
for some absolute constant $c > 0$.

Next we consider exponential sums with  ratios  $m/x^s$ with $x$ running through a 
shifted interval $\cX$ as in~\eqref{eq: int X}. 

A bound on such sums has been given  in~\cite[Theorem~3.1]{BagSh} which we now  improve.  

It is convenient to define 
$$
\e_p(z) = \exp(2 \pi i z/p).
$$

\begin{theorem}
\label{thm: KloostFrac}  Let  $s$ be a fixed positive integer. 
Then for an interval $\cH$  as in~\eqref{eq: init int H}  a set $\cM$ as in~\eqref{eq: set M},  for any fixed positive integer $\ell$ the following bound holds
$$
\sum_{m\in\cM}\left|\sum_{  x\in L+\cH}\e_p(amx^{-s})\right|\le HM\(\frac{p}{MH^{2\ell/(\ell+1)}}+\frac{1}{M}\)^{1/(2\ell)} p^{o(1)}.
$$
\end{theorem}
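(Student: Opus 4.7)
The plan is to raise the sum to a large even power, apply additive orthogonality modulo~$p$, and then invoke an energy estimate for the set $\{x^{-s}:x\in L+\cH\}\subset\F_p$. Write
$$S=\sum_{m\in\cM}|T(m)|,\qquad T(m)=\sum_{x\in L+\cH}\e_p(amx^{-s}),$$
and apply H\"older's inequality in $m$:
$$S^{2\ell}\le M^{2\ell-1}\sum_{m\in\cM}|T(m)|^{2\ell}\le M^{2\ell-1}\sum_{m\in\F_p}|T(m)|^{2\ell},$$
the second step being free since the summand is nonnegative. Expanding the $2\ell$-th power and integrating over $m\in\F_p$ by additive orthogonality gives
$$\sum_{m\in\F_p}|T(m)|^{2\ell}=p\cdot J_\ell,$$
where $J_\ell$ counts the tuples $(x_1,\dots,x_\ell,y_1,\dots,y_\ell)\in(L+\cH)^{2\ell}$ satisfying $x_1^{-s}+\dots+x_\ell^{-s}\equiv y_1^{-s}+\dots+y_\ell^{-s}\pmod p$.

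The heart of the argument is a bound of the form
$$J_\ell\le p^{o(1)}\left(\frac{H^{2\ell}}{p}+H^{2\ell^2/(\ell+1)}\right).$$
The first term is the contribution expected from an equidistribution heuristic, and the second measures genuine coincidences in the set of $s$-th power inverses of elements of an interval. For $\ell=1$ the congruence reduces to $x^s\equiv y^s\pmod p$, which has at most $sH$ solutions, so the estimate is immediate. For larger $\ell$ one proceeds by induction on $\ell$: a Cauchy--Schwarz step in an appropriately chosen variable recasts $J_\ell$ in terms of $J_{\ell-1}$ and an auxiliary count that can be controlled by Ayyad--Cochrane--Zheng-type bounds for the modular multiplicative energy of an interval. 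The replacement of $x^{-1}$ by $x^{-s}$ affects only bounded multiplicities in the map $x\mapsto x^{-s}$ and carries through the inductive scheme.

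Plugging this estimate into the H\"older bound gives
$$S^{2\ell}\le p^{o(1)}\,M^{2\ell-1}\Bigl(H^{2\ell}+p\,H^{2\ell^2/(\ell+1)}\Bigr)=p^{o(1)}(HM)^{2\ell}\Bigl(\frac{1}{M}+\frac{p}{MH^{2\ell/(\ell+1)}}\Bigr),$$
since $2\ell-2\ell/(\ell+1)=2\ell^2/(\ell+1)$, and the theorem follows upon extracting the $2\ell$-th root. The principal obstacle is obtaining the energy bound on $J_\ell$; everything else is a clean combination of H\"older's inequality and orthogonality. A secondary technicality is ensuring that the implicit $p^{o(1)}$ factors accumulate harmlessly through the induction, which is standard because $\ell$ is taken to be a fixed positive integer.
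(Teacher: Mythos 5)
Your reduction is sound and is essentially the one in the paper: H\"older's inequality in $m$ followed by orthogonality brings you to $|S|^{2\ell}\le pM^{2\ell-1}J_{\ell,s}(\cX)$, where $J_{\ell,s}(\cX)$ is the $2\ell$-fold additive energy of $\{x^{-s}:x\in L+\cH\}$; the paper reaches the identical inequality by a slightly different route (it keeps $m\in\cM$, inserts unimodular weights $\vartheta_m$, introduces the representation counts $I_\lambda$ and applies Cauchy's inequality over $\lambda$, instead of your completion of the sum over $m\in\F_p$, which is fine since $p\nmid a$ is implicit). Your final bookkeeping, using $2\ell-2\ell/(\ell+1)=2\ell^2/(\ell+1)$, is also correct.

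The genuine gap is your treatment of the energy bound $J_{\ell,s}(\cX)\le\bigl(H^{2\ell^2/(\ell+1)}+H^{2\ell}/p\bigr)H^{o(1)}$, which you rightly call the heart of the argument but then dispatch in two sentences. This is not a statement one gets by ``induction on $\ell$ with a Cauchy--Schwarz step and Ayyad--Cochrane--Zheng-type multiplicative energy bounds.'' Already for $\ell=2$ the claimed exponent $H^{8/3}$ requires a concentration estimate of the shape $\#\{(x_1,x_2)\in\cX^2:\ x_1^{-s}+x_2^{-s}\equiv\lambda \bmod p\}\ll H^{2/3+o(1)}$ in the relevant range, which is a statement about points on a curve in a box and does not follow from the multiplicative energy of an interval; and a generic Cauchy--Schwarz recursion relating $J_\ell$ to $J_{\ell-1}$ loses a factor of order $H^2$ per step, which can never produce the exponents $2\ell^2/(\ell+1)$. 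Your side remark that passing from $x^{-1}$ to $x^{-s}$ ``affects only bounded multiplicities'' is also not a reduction: the energy depends on the additive structure of the image set $\{x^{-s}\}$, not merely on the multiplicity of the map. This bound is precisely the external deep input of the paper: it is quoted there as Lemma~\ref{lem: BGsym}, i.e.\ Proposition~1 of Bourgain and Garaev~\cite{BG}, not proved. If you replace your sketch by a citation of that result, your argument becomes a complete proof, essentially identical to the paper's.
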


We now combine some ideas and tools used in the proof of Theorem~\ref{thm: HM} 
to derive the following asymptotic formula.

\begin{theorem}
\label{thm: hexagon}  
Let $\ell$ and $s$ be  fixed positive integers and let $\varepsilon >0$ be a fixed real number. 
Assume that
$$
H^{24/17}M^{11/17}>p^{1+\varepsilon}, \quad H^{9/5} M^{2/5}>p^{1+\varepsilon}, \quad H^{6/5}M>p^{1+\varepsilon}.
$$
Then  for any sets $\cM_i \subseteq \F_p^*$, $i =1, \ldots 6$, with $\#\cM_i=M$, integers $L_i$,  interval $\cH$ 
as in~\eqref{eq: init int H} and  integer $\lambda$ the number $T_6(\lambda)$ of solutions of the congruence
\begin{align*}
\frac{m_1}{x_1^s}+ \frac{m_2}{x_2^s}+\frac{m_3}{x_3^s}&+\frac{m_4}{x_4^s}+ \frac{m_5}{x_5^s}+\frac{m_6}{x_6^s}\equiv \lambda\mod  p, \\
  m_i\in \cM_i, & \ x_i  \in L_i+ \cH, \quad i =1, \ldots, 6,
\end{align*}
satisfies
\begin{equation}
\label{eqn: T6 asympt}
T_6(\lambda)=\frac{H^6M^6}{p}\Bigl(1+O(p^{-\delta})\Bigr)
\end{equation}
where  $\delta> 0$ depends only on $s$ and $\varepsilon$. 
\end{theorem}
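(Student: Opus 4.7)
The plan is to apply Fourier analysis and show that the non-trivial frequencies contribute an error of size $O(H^6M^6p^{-1-\delta})$. Writing
\[
S_i(a) = \sum_{m\in\cM_i}\sum_{x\in L_i+\cH}\e_p(am x^{-s}),\qquad i = 1,\ldots,6,
\]
the orthogonality of additive characters modulo $p$ gives
\[
T_6(\lambda) = \frac{1}{p}\sum_{a=0}^{p-1}\e_p(-a\lambda)\prod_{i=1}^6 S_i(a).
\]
The $a=0$ term contributes exactly $H^6M^6/p$, which is the main term in~\eqref{eqn: T6 asympt}, so it remains to estimate the contribution $E = \frac{1}{p}\sum_{a\ne 0}\e_p(-a\lambda)\prod_i S_i(a)$ of the nontrivial frequencies and show that $|E|\ll H^6M^6p^{-1-\delta}$ for some admissible $\delta>0$.

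Two bounds on $S_i(a)$ are available. Theorem~\ref{thm: KloostFrac}, after passing absolute values inside the $m$-sum, supplies for every fixed integer $\ell\ge 1$ the $a$-uniform estimate
\[
\max_{a\in\F_p}|S_i(a)| \le HM\,\Delta_\ell,\qquad \Delta_\ell := \left(\frac{p}{MH^{2\ell/(\ell+1)}}+\frac{1}{M}\right)^{1/(2\ell)}p^{o(1)}.
\]
Orthogonality together with the observation that the congruence $m_1 x_2^s\equiv m_2 x_1^s\pmod p$ over $\cM_i^2\times(L_i+\cH)^2$ has at most $HM\min(H,sM)$ solutions (fix any three variables; the fourth is determined up to at most $s$ choices) yields the complementary Parseval-type bound $\sum_{a\in\F_p}|S_i(a)|^2 \le p\,HM\min(H,sM)$. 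Combining the two via Hölder's inequality in the form
\[
|E| \le \frac{1}{p}\bigl(\max_{a\ne 0}|S_1(a)|\bigr)\cdots\bigl(\max_{a\ne 0}|S_4(a)|\bigr)\Bigl(\sum_a|S_5(a)|^2\Bigr)^{1/2}\Bigl(\sum_a|S_6(a)|^2\Bigr)^{1/2}
\]
(the choice of which four factors to bound by $L^\infty$ is immaterial, as all six obey identical estimates) produces
\[
|E| \ll (HM)^5\min(H,sM)\,\Delta_\ell^4\,p^{o(1)}.
\]

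Comparing with the target $H^6M^6p^{-1-\delta}$ reduces the problem to the inequality $\Delta_\ell^4 \ll \max(M,H/s)\,p^{-1-\delta}$, which after expanding $\Delta_\ell$ becomes a polynomial inequality of the form $MH^{\alpha(\ell)}\ge p^{\beta(\ell)}$ whose exact exponents depend on $\ell$, on the sign of $M-H$, and on which of the two summands in $\Delta_\ell$ is dominant. The three hypotheses $H^{24/17}M^{11/17},\ H^{9/5}M^{2/5},\ H^{6/5}M>p^{1+\varepsilon}$ should correspond to three boundary surfaces in the $(H,M)$-plane arising from three optimal choices of $\ell\ge 1$ in three distinct regimes of the ratio $H/M$. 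The main obstacle — indeed the real content of the theorem — is the bookkeeping required to verify that the conjunction of these three conditions exhausts the admissible region, i.e.\ that for every admissible $(H,M)$ some integer $\ell$ extracts the needed power saving from at least one of them. Once the correct $\ell$ is fixed in each regime, the resulting $\delta>0$ is linear in $\varepsilon$, with $s$ entering only through the factor $\min(H,sM)$ in the Parseval bound.
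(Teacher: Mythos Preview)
Your overall architecture --- Fourier expansion, main term from $a=0$, then bounding the error by taking $L^\infty$ on four of the six factors $S_i(a)$ and $L^2$ on the remaining two --- is exactly the paper's. The $L^\infty$ bound via Theorem~\ref{thm: KloostFrac} is also right. The gap is in the $L^2$ step.

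Your Parseval bound $\sum_a |S_i(a)|^2 = p\,J_s(L_i,\cH,\cM_i)\le p\,HM\min(H,sM)$ is correct but \emph{trivial}, and it is not strong enough to reach the stated thresholds. Test it at the diagonal $H=M$: the theorem claims the result for $H=M>p^{17/35+\varepsilon}$, but your inequality $\Delta_\ell^4\ll \max(M,H/s)p^{-1}$ then reads (for the first summand of $\Delta_\ell$) $p^{(\ell+2)/\ell}\ll M^{(\ell^2+7\ell+2)/(\ell(\ell+1))}$, i.e.\ $M\gg p^{(\ell+1)(\ell+2)/(\ell^2+7\ell+2)}$. The minimum of this exponent over integers $\ell\ge 1$ is $3/5$ (attained at $\ell=1,2$), so your argument cannot go below $H=M>p^{3/5}$, well short of $p^{17/35}$.

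What the paper actually uses for the $L^2$ step is the nontrivial multiplicative energy bound of Banks--Shparlinski (Lemma~\ref{lem: BS-Energy}), transported to arbitrary shifts and to $s$-th powers via Lemmas~\ref{lem: sliding to initial int} and~\ref{lem: s -> 1}: for $H<p^{2/3}$,
\[
J_s(L,\cH,\cM)\ll \frac{H^2M^2}{p}+\frac{HM^{7/4}}{p^{1/4-o(1)}}+HMp^{o(1)}+M^2.
\]
The paper then fixes a single $\ell=3$ and runs through the four terms above; the three stated hypotheses $H^{24/17}M^{11/17}$, $H^{9/5}M^{2/5}$, $H^{6/5}M>p^{1+\varepsilon}$ are precisely what is needed to beat the $HM^{7/4}p^{-1/4}$, $M^2$, and $HM$ terms respectively (the $H^2M^2/p$ term is automatic). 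So your guess that the three hypotheses encode three different choices of $\ell$ is also off: they encode the three nontrivial terms in a single energy estimate, all handled with $\ell=3$. The remaining ``bookkeeping'' you defer is therefore not the real content; the missing idea is the Banks--Shparlinski energy bound.
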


In particular, the asymptotic formula~\eqref{eqn: T6 asympt} holds for 
$$
H= M > p^{17/35+\varepsilon}.
$$  

We use this opportunity to pose an open question about obtaining an 
asymptotic formula for the $5$-term analogues of $T_6(\lambda)$ for 
 $H= M <p^{1/2}$.  This is interesting and still open even in the special case 
 $\cM_1 = \ldots = \cM_5$ and $L_1 = \ldots =L_5 = 0$.

\subsection{Notation and conventions}

We recall
that the notations $U = O(V)$,  $U \ll V$ and  $V \gg U$  are
all equivalent to the statement that $|U| \le c V$ holds
with some constant $c> 0$.

Any implied constants in symbols $O$, $\ll$
and $\gg$ may occasionally, where obvious, depend on the
integer parameters $\ell$, $r$ and $s$  and  the real parameter $\varepsilon >0$, 
 and are absolute otherwise.

Finally $U= o(V)$ means that $U \le \psi(V)V$ for some
function $\psi$ such that $\psi(V) \to 0$ as $V \to \infty$.

We use    $\tau(r)$ to denote the  the number of integer positive divisors of an
integer $r \ne 0$,    and recall the classical bound
\begin{equation}
\label{eq: tau}
\tau(r) = |r|^{o(1)},
\end{equation}
see, for example~\cite[Equation~(1.81)]{IwKow}).

Throughout the paper, we always assume that $\F_p$ is represented
by the set $\{0, 1, \ldots, p-1\}$ and we freely alternate between equations
in $\F_p$ and congruences modulo $p$.

\section{Preliminaries}

\subsection{Multiplicative congruences with initial intervals}

We need the following bound from the work of  Banks and Shparlinski~\cite[Theorem~2.1]{BaSh}, which applies to coincidences 
in products of elements from an initial interval  and an arbitrary subset $\cM$ of $\F_p^*$. 

Let  $J(\cH,\cM)$ be the number of solutions to the congruence
$$
h_1m_1\equiv h_2m_2\mod  p,\quad h_1,h_2\in \cH,\ m_1,m_2\in\cM,
$$
with  $\cH$ as in~\eqref{eq: init int H} and   $\cM$ as in~\eqref{eq: set M}. 

\begin{lemma}
\label{lem: BS-Energy} The following bound holds
\begin{align*}
J(\cH&,\cM)\\
&\ll \left\{\begin{array}{llll}
H^2M^2/p^{-1}+HMp^{o(1)}, &  \text{if }  H\ge p^{2/3},\\
H^2M^2p^{-1} +HM^{7/4}p^{-1/4+o(1)}+M^2, &  \text{if }   H<p^{2/3}\\
&  \quad   \text{ and } M\ge p^{1/3},\\
HMp^{o(1)}+M^{2}, &  \text{if }   H<p^{2/3}\\
&  \quad  \text{ and } M<p^{1/3}.
\end{array}
\right.
\end{align*}
\end{lemma}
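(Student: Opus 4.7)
My plan is to use multiplicative-character duality. By orthogonality of characters on $\F_p^*$, one has $\bbbone[h_1m_1\equiv h_2m_2]=\frac{1}{p-1}\sum_\chi\chi(h_1m_1)\overline{\chi(h_2m_2)}$, so that
\[
J(\cH,\cM)=\frac{1}{p-1}\sum_\chi|U_\chi|^2|V_\chi|^2,\quad U_\chi=\sum_{h\in\cH}\chi(h),\ V_\chi=\sum_{m\in\cM}\chi(m).
\]
The principal character contributes $H^2M^2/(p-1)$, supplying the main term; it remains to bound the tail $\Sigma=\sum_{\chi\ne\chi_0}|U_\chi V_\chi|^2$.

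I would combine three standard ingredients to control $\Sigma$. First, Parseval gives $\sum_\chi|V_\chi|^2=(p-1)M$ and similarly for $\cH$. Second, the fourth-moment identity $\sum_\chi|U_\chi|^4=(p-1)E_\times(\cH)$, where $E_\times(\cH)=\#\{(h_1,h_2,h_3,h_4)\in\cH^4:h_1h_2\equiv h_3h_4\pmod p\}$, reduces control of $|U_\chi|$ in $L^4$ to bounding the multiplicative energy of the interval; since $\cH$ is initial, one writes $h_1h_2-h_3h_4=kp$ for an integer $k$ with $|k|\ll H^2/p$ and applies the divisor bound~\eqref{eq: tau} to get $E_\times(\cH)\ll H^2p^{o(1)}+H^4/p$. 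Third, the Burgess bound $\max_{\chi\ne\chi_0}|U_\chi|\ll H^{1-1/r}p^{(r+1)/(4r^2)+o(1)}$ is available for any fixed integer $r\ge 2$, and an analogous bound is available for $|V_\chi|$ on average (via the 2nd/4th moment) even though $\cM$ carries no interval structure.

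The three cases in the statement reflect optimal balances of these inputs via H\"older's inequality. For $H\ge p^{2/3}$, the Burgess bound for $\max|U_\chi|^2$ combined with $\sum_\chi|V_\chi|^2=(p-1)M$ yields $\Sigma/(p-1)\ll HMp^{o(1)}$ by selecting $r$ large enough to secure the required power savings. For $H<p^{2/3}$ with $M\ge p^{1/3}$, I would instead apply Cauchy--Schwarz in the form $\Sigma\le(\sum|U_\chi|^4)^{1/2}(\sum|V_\chi|^4)^{1/2}$, pairing the interval energy $E_\times(\cH)\ll H^4/p$ against the trivial $E_\times(\cM)\le M^3$, and then refining the $\cM$-side by splitting off the contribution of characters where $|V_\chi|$ is large and estimating it through a Burgess-type bound on $\max_{\chi\ne\chi_0}|V_\chi|$; this interpolation is what produces the mixed term $HM^{7/4}p^{-1/4+o(1)}$. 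For $H<p^{2/3}$ and $M<p^{1/3}$, the set $\cM$ is too small for character orthogonality to save over diagonal estimates, and trivial bounds yield $HMp^{o(1)}+M^2$ directly.

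The hardest step will be the middle case. Extracting the sharp exponent $HM^{7/4}p^{-1/4}$ requires the H\"older exponents and the split of $\chi$'s to be chosen so that neither the interval energy bound for $\cH$ nor the Burgess savings on $\cM$-character sums is wasted, while simultaneously tracking the cross-over in $E_\times(\cH)$ at $H\sim p^{1/2}$ against the size of $M$. The $p^{o(1)}$ factors throughout must be absorbed into~\eqref{eq: tau}-style estimates applied to divisor counts of products of elements of $\cH$ and to level sets of $|V_\chi|$.
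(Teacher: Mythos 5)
You should first note that the paper does not prove this lemma at all: it is imported verbatim from Banks--Shparlinski~\cite[Theorem~2.1]{BaSh}, so the only ``proof'' in the paper is a citation, and your attempt has to be judged against the external argument. Your opening reduction via orthogonality, $J(\cH,\cM)=\frac{1}{p-1}\sum_\chi|U_\chi|^2|V_\chi|^2$, and the energy bound $E_\times(\cH)\ll H^2p^{o(1)}+H^4/p$ via the divisor function are both correct, but the deductions you build on them do not deliver the stated exponents in any of the three cases.

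Concretely: (i) For $H\ge p^{2/3}$, combining $\max_{\chi\ne\chi_0}|U_\chi|^2$ with $\sum_\chi|V_\chi|^2=(p-1)M$ forces you to prove $\max_{\chi\ne\chi_0}|U_\chi|\ll H^{1/2}p^{o(1)}$, i.e.\ square-root cancellation for every nonprincipal character; Burgess only gives $H^{1-1/r}p^{(r+1)/(4r^2)+o(1)}$, which for $H\asymp p^{2/3}$ is of size about $Hp^{-c/r}$, nowhere near $H^{1/2}=p^{1/3}$, and no choice of $r$ repairs this. (ii) The term $HM^{7/4}p^{-1/4}$ cannot be reached by ``a Burgess-type bound on $\max_{\chi\ne\chi_0}|V_\chi|$'': $\cM$ is an arbitrary set, and if $\cM$ lies in the kernel of the quadratic character $\chi$ then $|V_\chi|=M$ for that nonprincipal $\chi$, so no nontrivial pointwise bound exists. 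The Cauchy--Schwarz step you do justify, $\Sigma\le\bigl(\sum_\chi|U_\chi|^4\bigr)^{1/2}\bigl(\sum_\chi|V_\chi|^4\bigr)^{1/2}$ with $E_\times(\cM)\le M^3$, yields only $HM^{3/2}p^{o(1)}+H^2M^{3/2}p^{-1/2}$, which is weaker than $HM^{7/4}p^{-1/4}$ throughout the range $M\le p$. (iii) The third case is not ``trivial'': fixing $h_1,m_1,m_2$ determines $h_2$ modulo $p$, so the direct count gives $HM^2$, not $HMp^{o(1)}+M^2$; obtaining the latter already requires the nontrivial input of~\cite{BaSh} (a geometry-of-numbers/lattice-point argument for the congruence $h_1m_1\equiv h_2m_2$, not a character-sum interpolation). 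As it stands the proposal does not establish the lemma; in the context of this paper the correct course is simply to cite~\cite[Theorem~2.1]{BaSh}.
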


From Lemma~\ref{lem: BS-Energy} we easily derive the following consequence:

\begin{lemma}
\label{lem: IntSet-Energy}
 Assume that  cardinalities $K$ and $M$ of an initial interval $\cK = \{1, \ldots, K\} $ and a set $\cM \subseteq  \F_p^*$ satisfy
$$
p > K\ge p^{2/3},  \qquad  KM \ge p,
$$
or
$$
K<p^{2/3},  \qquad M \ge p^{1/3}, \qquad  KM^{1/4} \ge p^{3/4}.
$$
Then we have
$$
J(\cK,\cM)
\le K^2M^2 p^{-1+o(1)}.
$$
\end{lemma}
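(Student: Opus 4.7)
The plan is to derive Lemma~\ref{lem: IntSet-Energy} as a direct corollary of Lemma~\ref{lem: BS-Energy}, applied with $\cH = \cK$ and $H = K$. Each of the three branches in Lemma~\ref{lem: BS-Energy} produces a main term $H^2M^2/p$ (or $HMp^{o(1)}$ in the third branch), together with one or two additional terms; the entire task is to show that under the hypotheses of Lemma~\ref{lem: IntSet-Energy} these additional terms are already absorbed in $K^2M^2p^{-1+o(1)}$.

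First, in the case $K \ge p^{2/3}$ with $KM \ge p$, Lemma~\ref{lem: BS-Energy} gives $J(\cK,\cM)\ll K^2M^2 p^{-1}+KMp^{o(1)}$, and the condition $KM \ge p$ immediately yields $KM \le K^2M^2 p^{-1}$, so the bound collapses to $K^2M^2p^{-1+o(1)}$. Second, in the case $K < p^{2/3}$ and $M \ge p^{1/3}$ with $KM^{1/4}\ge p^{3/4}$, Lemma~\ref{lem: BS-Energy} produces the terms $K^2M^2 p^{-1}$, $KM^{7/4}p^{-1/4+o(1)}$, and $M^2$. The hypothesis $KM^{1/4}\ge p^{3/4}$ is exactly the statement $p^{3/4}\le K M^{1/4}$, which after multiplying both sides by $M^{7/4}p^{-1/4}$ rearranges to $KM^{7/4}p^{-1/4}\le K^2 M^2 p^{-1}$, absorbing the second term. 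For the third term, one observes that the hypotheses $M<p$ (since $\cM \subseteq \F_p^*$) and $KM^{1/4}\ge p^{3/4}$ force $K \ge p^{3/4}/M^{1/4} \ge p^{1/2}$, whence $M^2 \le K^2 M^2 p^{-1}$.

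Combining the two branches gives the desired bound in the stated ranges, with the third branch of Lemma~\ref{lem: BS-Energy} (where $M < p^{1/3}$) simply not being relevant since Lemma~\ref{lem: IntSet-Energy} rules out that region.

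There is no real obstacle here; the only minor point of attention is the verification in the second branch that $M < p$ together with $KM^{1/4}\ge p^{3/4}$ yields $K \ge p^{1/2}$, which is what guarantees the residual term $M^2$ is dominated by the main term. Everything else is a straightforward inequality juggling starting from the three hypotheses of Lemma~\ref{lem: IntSet-Energy}.
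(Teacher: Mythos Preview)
Your proposal is correct and is exactly the approach the paper intends: the paper states only that Lemma~\ref{lem: IntSet-Energy} is ``easily derived'' from Lemma~\ref{lem: BS-Energy}, and your case-by-case absorption of the secondary terms is the natural way to carry this out. One trivial slip: in the second branch the multiplier you name ($M^{7/4}p^{-1/4}$) does not literally produce the inequality you state---the correct multiplier is $KM^{7/4}p^{-1}$---but the resulting inequality $KM^{7/4}p^{-1/4}\le K^2M^2p^{-1}$ is indeed equivalent to $KM^{1/4}\ge p^{3/4}$, so the argument stands.
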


We now consider triple products $j k m $ where as before,  $j \in \cJ$,  $k \in \cK$, $m \in \cM$,
where $\cJ$ and $\cK$ are initial intervals of $J$ and $K$ consecutive integers of the type similar to~\eqref{eq: init int H} and $\cM$
is an arbitrary set as in~\eqref{eq: set M}.
In particular, let
\begin{align*}
R(\cJ, \cK, \cM) =\# \{j_1 k_1 m_1 & \equiv j_2 h_2 m_2 \mod  p:\\
& ~j_1, j_2 \in \cJ, \, k_1,k_2 \in \cK, \, m_1,m_2 \in \cM\} .
 \end{align*}

\begin{lemma}
\label{lem:TripleProd}  Let $r\ge 1$ be a fixed integer.
 Assume that an initial interval $\cJ$  is of cardinality $J = \fl{p^{1/r}}$
 and   that  cardinalities $K$ and $M$ of an initial  interval $\cK$ and a set $\cM \subseteq \F_p^*$
 satisfy
$$
K\ge p^{2/3},  \qquad KM \ge p, 
$$
or
$$
K<p^{2/3},  \qquad M \ge p^{1/3}, \qquad  KM^{1/4} \ge p^{3/4}.
$$
Then we have
$$
R(\cJ, \cK, \cM) = \frac{J^2K^2 M^2} {p-1} \( 1+ O\( K^{-1/r}  p^{3/(8r)+ o(1)}\)\).
$$
\end{lemma}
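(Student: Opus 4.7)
The plan is to apply multiplicative character orthogonality modulo $p$ to write
$$R(\cJ,\cK,\cM) = \frac{1}{p-1}\sum_{\chi} |S_\cJ(\chi)|^2 |S_\cK(\chi)|^2 |S_\cM(\chi)|^2,$$
with $S_\cS(\chi) = \sum_{s\in\cS}\chi(s)$, where $\chi$ ranges over the multiplicative characters modulo $p$. The principal character $\chi_0$ contributes exactly $J^2K^2M^2/(p-1)$, which is the claimed main term.

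For the contribution $E$ of non-principal characters, I would peel off the factor $|S_\cK(\chi)|^2$ and control it via Burgess's bound. Since $K$ is large under either hypothesis (indeed $K \ge p^{2/3}$ in the first case, and $KM^{1/4}\ge p^{3/4}$ with $M\ge p^{1/3}$ in the second forces $K$ to be comparable to a Burgess-accessible range), Burgess with parameter $r$ yields
$$|S_\cK(\chi)| \ll K^{1-1/r} p^{(r+1)/(4r^2)+o(1)},$$
and one checks that $(r+1)/(4r^2) \le 3/(8r)$ precisely for $r \ge 2$. Using then
$$E \le \max_{\chi\ne\chi_0}|S_\cK(\chi)|^2 \cdot J(\cJ,\cM),$$
where $J(\cJ,\cM)$ is the multiplicative energy estimated through Lemma~\ref{lem: BS-Energy} applied to the short initial interval $\cJ$ of length $J = \lfloor p^{1/r}\rfloor$ and the set $\cM$, I would conclude $E \ll (J^2K^2M^2/p)\cdot K^{-2/r}p^{3/(4r)+o(1)}$. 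Under the stated hypotheses this is dominated by $(J^2K^2M^2/p)\cdot K^{-1/r}p^{3/(8r)+o(1)}$, producing the asymptotic as stated.

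The main obstacle will be handling $J(\cJ,\cM)$. For $r \ge 3$, we have $J < p^{1/2}$, so Lemma~\ref{lem: BS-Energy} gives not only the ``expected'' term $J^2M^2/p$ but also the secondary terms $JM^{7/4}p^{-1/4+o(1)}$ and $M^2$, which are not automatically smaller. Absorbing them will require carefully invoking the hypotheses $KM^{1/4} \ge p^{3/4}$ and $M \ge p^{1/3}$ after multiplication by the Burgess factor $K^{2-2/r}p^{3/(4r)+o(1)}$, and possibly replacing the crude ``maximum times energy'' estimate by Hölder's inequality with higher moments of $|S_\cK(\chi)|$, or by exploiting $\cJ$ as an auxiliary multiplicative shift in Burgess-style amplification on $\cK$. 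The case $r=1$ is immediate, as the statement then collapses directly to Lemma~\ref{lem: IntSet-Energy} applied to $(\cK,\cM)$.
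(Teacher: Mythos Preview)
Your character-sum setup and the identification of the main term are correct, but the way you handle the error term has a genuine gap. The estimate
\[
E \;\le\; \max_{\chi\ne\chi_0}|S_\cK(\chi)|^2 \cdot J(\cJ,\cM)
\]
pairs the wrong two objects. Since $J=\fl{p^{1/r}}$ is short, for $r\ge 3$ the energy $J(\cJ,\cM)$ from Lemma~\ref{lem: BS-Energy} is dominated by the secondary term $M^2$ (indeed $J^2M^2/p < M^2$ once $J<p^{1/2}$). Feeding $J(\cJ,\cM)\asymp M^2$ and Burgess on $S_\cK$ into your bound gives a relative error of order $K^{-2/r}p^{1-2/r+(r+1)/(2r^2)}$, which for the range $K<p^{2/3}$ already fails at $r=3$ (it would require $K>p^{5/6}$). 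Your fallback suggestions---higher moments of $|S_\cK|$, or using $\cJ$ to amplify Burgess on $\cK$---do not resolve this; the hypotheses of the lemma are tailored to $J(\cK,\cM)$, not $J(\cJ,\cM)$, and no Burgess exponent on $\cK$ alone can absorb the loss.

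The paper's argument reverses the roles. One peels off only a $2/r$-fraction of $|S_\cK|^2|S_\cM|^2$ (using $|S_\cK(\chi)|\le K^{1/2}p^{3/16+o(1)}$ and the trivial bound $|S_\cM(\chi)|\le M$), then applies H\"older with exponents $(r,\,r/(r-1))$ so that the two resulting moments are
\[
S=\sum_\chi |S_\cJ(\chi)|^{2r}
\qquad\text{and}\qquad
T=\sum_\chi |S_\cK(\chi)|^{2}|S_\cM(\chi)|^{2}=(p-1)\,J(\cK,\cM).
\]
The point of introducing $\cJ$ with $J^r<p$ is precisely that $S$ counts solutions to $j_1\cdots j_r=j_{r+1}\cdots j_{2r}$ over $\Z$, whence $S\le J^r p^{1+o(1)}$ by the divisor bound. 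Meanwhile $T$ is controlled by Lemma~\ref{lem: IntSet-Energy}, which is exactly what the hypotheses on $K$ and $M$ are designed for. So the missing idea is: take the high moment on the \emph{short} interval $\cJ$, and take the energy on the pair $(\cK,\cM)$.
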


\begin{proof} For a set $\cU  \subseteq \F_p^*$ we define the following
multiplicative character sum
$$
S_\cU(\chi) = \sum_{u \in \cU} \chi(u),
$$
see~\cite[Chapter~3]{IwKow} for a background on characters. 
Then, using the orthogonality of characters, we easily express $R(\cJ, \cK, \cM)$ as
\begin{equation}
\label{eq:R Delta}
R(\cJ, \cK, \cM) = \frac{J^2K^2 M^2} {p-1} + \frac{1}{p} \Delta, 
\end{equation}
where
$$
\Delta = \sum_{\chi \ne\chi_0} \left| S_\cJ(\chi)\right|^2
 \left| S_\cK(\chi)\right|^2 \left| S_\cM(\chi)\right|^2,
$$
with $\chi$ running over all multiplicative characters of  $ \F_p^*$ except
for the principal character $\chi_0$.

We now fix  some real parameter $\gamma> 0$, to be chosen later.

By the special case of the Burgess bound, see~\cite[Theorem~12.6]{IwKow},
we have
$$
 \left| S_\cK(\chi)\right| \le K^{1/2} p^{3/16+ o(1)}.
$$
We also have a trivial bound
$$
 \left| S_\cM(\chi)\right| \le M.
$$  
Hence, for $\gamma < 2$ we have
$$
\Delta \le  K^{\gamma/2} M^{\gamma} p^{3\gamma/16+ o(1)} \sum_{\chi \ne\chi_0} \left| S_\cJ(\chi)\right|^2
 \left| S_\cK(\chi)\right|^{2-\gamma} \left| S_\cM(\chi)\right|^{2-\gamma}.
$$

Next,  by H{\"o}lder's inequality (and also dropping the now
unnecessary condition $\chi \ne\chi_0$) we obtain
\begin{equation}
\label{eq:Delta-Holder}
\Delta \le  K^{\gamma/2} M^{\gamma} p^{3\gamma/16+ o(1)}  S^{1/r}  T^{(r-1)/r}, 
\end{equation}
 where
 \begin{align*}
& S =  \sum_{\chi} \left| S_\cJ(\chi)\right|^{2r},\\
& T =  \sum_{\chi}
 \left| S_\cK(\chi)\right|^{(2-\gamma)r/(r-1)} \left| S_\cM(\chi)\right|^{(2-\gamma)r/(r-1)}, 
 \end{align*}
 with $\chi$ running through all multiplicative characters of $\F_p^*$. 

By the orthogonality of characters, we see that $S  = (p-1) U$, where $U$ is the number of solutions to the congruence
$$
j_1 \ldots j_r \equiv j_{r+1} \ldots j_{2r} \mod  p, \qquad
j_1, \ldots, j_{2r} \in \cJ.
$$
Taking into acount that $J^r < p$, we see that this congruence is equivalent to the equation (over $\Z$)
$$
j_1 \ldots j_r  =  j_{r+1} \ldots j_{2r},   \qquad
j_1, \ldots, j_{2r} \in \cJ, 
$$
and thus by the divisor function bound~\eqref{eq: tau}, we have
\begin{equation}
\label{eq:Bound S}
 S \le J^r p^{1+o(1)}.
\end{equation}

We now choose $\gamma$ to satisfy $(2-\gamma)r/(r-1) = 2$ or,  more explicitly,
\begin{equation}
\label{eq:gamma}
\gamma = 2/r.
\end{equation}
Thus, again by   the orthogonality of characters we see that
$$
T =  \sum_{\chi}
 \left| S_\cK(\chi)\right|^2 \left| S_\cM(\chi)\right|^2 = (p-1)V,
$$
where $V$ is the number of solutions to the system of congruences in Lemma~\ref{lem: IntSet-Energy}
and thus we have
\begin{equation}
\label{eq:Bound T}
T \le K^2M^2 p^{o(1)}.
\end{equation}
Substituting the bounds~\eqref{eq:Bound S} and~\eqref{eq:Bound T} in~\eqref{eq:Delta-Holder}
and recalling the choice of $\gamma$ in~\eqref{eq:gamma}, we obtain
\begin{align*}
\Delta & \le  K^{1/r} M^{2/r} p^{3/(8r)+ o(1)}  \(J p^{1/r}\)  \(K^2M^2\)^{(r-1)/r}\\
& =  J K^{2-1/r} M^2  p^{11/(8r)+ o(1)} ,
 \end{align*}
which after the substitution in~\eqref{eq:R Delta} and recalling the choice of $J$,  implies the result.
\end{proof}

\subsection{Multiplicative congruences with arbitrary  intervals}

The following result from~\cite[Lemma~2]{Gar}, relates the number of solutions $J(L,\cH, \cM)$ to the congruence 
$$
(L+h_1)m_1\equiv (L+h_2)m_2\mod  p,\quad h_1,h_2\in \cH,\  m_1,m_2\in\cM.
$$
with  $\cH$ as in~\eqref{eq: init int H},  $\cM$ as in~\eqref{eq: set M} and arbitrary $L\in \Z$ to 
 $J(0, \cH, \cM)=  J(\cH, \cM)$.

\begin{lemma}
\label{lem: sliding to initial int}
For any $L\in \Z$,  we have 
$$
J(L,\cH, \cM) \le 2 J(\cH, \cM) + M^2.
$$
\end{lemma}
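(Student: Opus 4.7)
The plan is to separate the quadruples contributing to $J(L,\cH,\cM)$ according to whether $m_1=m_2$ or $m_1\ne m_2$, reducing the off-diagonal part to the unshifted energy $J(\cH,\cM)$ with a controlled overhead. The diagonal case $m_1=m_2=m$ is easy: since $m\in\F_p^*$ and $0\le|h_1-h_2|<H\le p$, the congruence $(L+h_1)m\equiv(L+h_2)m\pmod p$ forces $h_1=h_2$, contributing exactly $HM$ quadruples that fit harmlessly into $J(\cH,\cM)$.

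For the off-diagonal part, I set $\mu:=m_2m_1^{-1}\in\F_p^*\setminus\{1\}$ and $y_i:=L+h_i\in\cX:=L+\cH$. The congruence becomes $y_1\equiv\mu y_2\pmod p$, so grouping pairs $(m_1,m_2)$ by the ratio $\mu$ yields
$$
J(L,\cH,\cM)-HM=\sum_{\mu\ne 1}r_\cM(\mu)\,\bigl|\cX\cap\mu^{-1}\cX\bigr|,
$$
and analogously $J(\cH,\cM)-HM=\sum_{\mu\ne 1}r_\cM(\mu)\,\bigl|\cH\cap\mu^{-1}\cH\bigr|$, where $r_\cM(\mu):=\#\{(m_1,m_2)\in\cM^2:m_2=\mu m_1\}$. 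The linear substitution $y=L+h$ then rewrites $\bigl|\cX\cap\mu^{-1}\cX\bigr|$ as the number of $h\in\cH$ satisfying $\mu h+L(\mu-1)\bmod p\in\cH$, i.e.\ precisely the unshifted count modified by an additive translation $c(\mu):=L(\mu-1)$.

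The crux of the argument is then the comparison, summed over $\mu$, between the translated and untranslated intersection counts, i.e.\ bounding the first of these sums by twice the second plus at most $M^2$. The translated interval $\cH-c(\mu)\pmod p$ has length $H$ but may wrap around $0$ in $\Z/p\Z$, splitting into at most two sub-intervals; each piece is contained in a translate of $\cH$, and the number of elements of the arithmetic progression $\mu\cH\pmod p$ in each piece is controlled by $|\cH\cap\mu^{-1}\cH|$. This covering step yields the factor $2$ in the bound, while the residual boundary discrepancy---at most one extra $h$ per pair $(m_1,m_2)$---sums to at most $|\cM|^2=M^2$, producing exactly the additive term in the statement.

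The main obstacle is precisely the comparison in the final step: one must carefully handle the wrap-around of the translated interval of length $H$ and verify that the overshoot at the boundary contributes at most one additional quadruple per pair $(m_1,m_2)\in\cM^2$, so that the aggregate error does not exceed $M^2$. The factor $2$ reflects the two-piece decomposition of the translated interval, and the additive $M^2$ absorbs those unavoidable boundary corrections; everything else is bookkeeping with $r_\cM(\mu)$.
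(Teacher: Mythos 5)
The paper does not prove this lemma itself (it quotes it from~\cite[Lemma~2]{Gar}), so I can only assess your argument on its own terms, and unfortunately its central step is false. After your (correct) reduction to comparing, for each ratio $\mu$, the translated count $\#\{h\in\cH:\ \mu h+L(\mu-1)\bmod p\in\cH\}$ with the untranslated count $\#\{h\in\cH:\ \mu h\bmod p\in\cH\}=|\cH\cap\mu^{-1}\cH|$, you assert that the former is at most twice the latter plus $1$, on the grounds that a translate of $\cH$ can be covered by two pieces in each of which the number of elements of the progression $\mu\cH \bmod p$ is ``controlled by'' $|\cH\cap\mu^{-1}\cH|$. That translation-invariance claim is exactly what fails, and your covering step assumes it rather than proves it. Concrete counterexample: take $\mu\equiv-1\pmod p$ and $L$ with $2L\equiv-(H+1)\pmod p$, with $2H<p$. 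Then $|\cH\cap\mu^{-1}\cH|=\#\{(h_1,h_2)\in\cH^2:\ h_1+h_2\equiv0 \pmod p\}=0$, while the translated count is $\#\{(h_1,h_2)\in\cH^2:\ h_1+h_2=H+1\}=H$. If moreover $\cM=-\cM$, this single ratio contributes $r_\cM(-1)\cdot H\approx MH$ to $J(L,\cH,\cM)$, which your bookkeeping tries to charge to the ``$+1$ per pair'' budget of size $M^2$ --- far too small when $H$ is large. (A smaller point: even where your covering works, two pieces with $+1$ each would give $2M^2$, not $M^2$.)

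The missing idea is a mechanism for the ``wrong-sign'' contributions, which cannot be handled ratio by ratio. A working route (in the spirit of~\cite[Lemma~2]{Gar}) fixes, for each $\mu$ with at least one solution, one solution of $u\equiv\mu v+L(\mu-1)$ in $\cH^2$ and takes differences with it, chosen so that one coordinate of each difference is positive; the differences with both coordinates positive are charged to $|\cH\cap\mu^{-1}\cH|$, but those with mixed signs are charged to the count for the ratio $-\mu$, i.e.\ after summing against $r_\cM(\mu)$ to the number of solutions of $h_1m_1+h_2m_2\equiv0\pmod p$. That global quantity is then bounded by $J(\cH,\cM)$, e.g.\ because in the multiplicative-character expansion it equals $\frac{1}{p-1}\sum_\chi\chi(-1)|S_\cH(\chi)|^2|S_\cM(\chi)|^2$, whose absolute value is at most the same sum without $\chi(-1)$, which is $J(\cH,\cM)$. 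This is precisely how the factor $2$ and the additive $M^2$ arise; without some such global input (positivity of the energy, or a Cauchy--Schwarz step relating the $\pm$ ratios), the per-$\mu$ comparison you propose cannot be repaired.
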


For an  integer $s$ we define  $J_s(L, \cH,\cM)$ as the number of solutions to the congruence
$$
m_1x_1^{-s}\equiv m_2x_2^{-s}\mod  p, \quad  m_1, m_2\in  \cM,\  x_1,x_2\in L+\cH.
$$
Note that $J_s(L, \cH,\cM)= J_{-s}(L,\cH,\cM)$ and thus $J_{\pm 1} (L,\cH,\cM)= J(L,\cH,\cM)$, 
where $J(L,\cH,\cM)$ is as in Lemma~\ref{lem: sliding to initial int}. 

We now relate the number of solutions $J_s(L,\cH, \cM)$  to  $J_1(L,\cH,\cM)= J(L,\cH,\cM)$.

\begin{lemma}
\label{lem: s -> 1}
For any  integer $s \ne 0$,  we have 
\begin{align*}
J_s(L,\cH&, \cM)  \\
&\ll \left\{\begin{array}{llll}
H^2M^2/p^{-1}+HMp^{o(1)}, &  \text{if }  H\ge p^{2/3},\\
H^2M^2p^{-1} +HM^{7/4}p^{-1/4+o(1)}+M^2, &  \text{if }   H<p^{2/3}\\
&  \quad  \text{ and } M\ge p^{1/3},\\
HMp^{o(1)}+M^{2}, &  \text{if }   H<p^{2/3}\\
& \quad   \text{ and } M<p^{1/3}.
\end{array}
\right.
\end{align*}
\end{lemma}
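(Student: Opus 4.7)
I would adapt the proof of Lemma~\ref{lem: BS-Energy} to the $s$-th power setting and then invoke Lemma~\ref{lem: sliding to initial int}. The key identity, via orthogonality of multiplicative characters of $\F_{p}^{*}$, is
\[
J_{s}(L,\cH,\cM)=\frac{1}{p-1}\sum_{\chi}|S_{\cM}(\chi)|^{2}\,|S_{L+\cH}(\chi^{s})|^{2},
\]
where $S_{\cU}(\chi)=\sum_{u\in\cU}\chi(u)$ and the sum runs over all multiplicative characters of $\F_{p}^{*}$. The analogous identity with $\chi^{s}$ replaced by $\chi$ recovers $J(L,\cH,\cM)=J_{1}(L,\cH,\cM)$, the quantity whose bound is furnished by Lemma~\ref{lem: BS-Energy} after sliding.

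Next I would split the sum over $\chi$ according to whether $\chi^{s}$ is principal. The characters $\chi$ with $\chi^{s}=\chi_{0}$ form a subgroup of the character group of order $d=\gcd(s,p-1)\le s$, and since $|S_{L+\cH}(\chi_{0})|=H$ their total contribution is at most
\[
\frac{d\,H^{2}}{p-1}\max_{\chi}|S_{\cM}(\chi)|^{2}\ll H^{2}M^{2}/p,
\]
which is absorbed by the main term present in each of the three cases of the claimed bound. For $\chi$ with $\chi^{s}\neq\chi_{0}$, the Burgess-type estimates on $|S_{L+\cH}(\chi^{s})|$ that underpin Lemma~\ref{lem: BS-Energy} depend only on the non-principality of the character acting on the interval and on the cardinality of that interval, and so apply to $\chi^{s}$ just as to $\chi$; the sums $|S_{\cM}(\chi)|$ are controlled via the Parseval-type relation $\sum_{\chi}|S_{\cM}(\chi)|^{2}=(p-1)M$. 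The at most $d$-to-$1$ nature of the map $\chi\mapsto\chi^{s}$ onto its image introduces only a factor depending on the fixed $s$, absorbed into the implicit constant.

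Finally, Lemma~\ref{lem: sliding to initial int} yields $J(L,\cH,\cM)\le 2J(\cH,\cM)+M^{2}$, transferring the three-case bound of Lemma~\ref{lem: BS-Energy} to $J_{s}(L,\cH,\cM)$ with the same error term $M^{2}$ already present in the statement. The main obstacle is to verify that each step of the proof of Lemma~\ref{lem: BS-Energy} (Burgess, H\"older and Plancherel-type inequalities) survives the substitution $\chi\mapsto\chi^{s}$; this is the case because those estimates depend only on non-principality and on $\#\cM$, and the at most $s$-to-$1$ nature of the substitution costs at most a bounded constant absorbed into $\ll$.
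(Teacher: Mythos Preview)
Your character identity is correct, but the plan that follows has a real gap. In your expression $\sum_\chi |S_{\cM}(\chi)|^2\,|S_{L+\cH}(\chi^s)|^2$ the two factors carry \emph{different} characters, so the remark that ``$\chi\mapsto\chi^s$ is $d$-to-$1$'' buys nothing: that substitution only collapses cleanly when \emph{both} factors involve $\chi^s$. What you actually propose is to rerun the proof of Lemma~\ref{lem: BS-Energy} with $\chi$ on $\cM$ and $\chi^s$ on the interval. But Lemma~\ref{lem: BS-Energy} is quoted here as a black box from~\cite{BaSh}; you would have to open that paper and check every step, and the term $HM^{7/4}p^{-1/4}$ in the middle range does \emph{not} arise from the ``Burgess pointwise on the interval, Parseval on $\cM$'' mechanism you describe, so the verification is genuinely nontrivial and is not carried out. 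Your closing appeal to Lemma~\ref{lem: sliding to initial int} is also out of place: that lemma concerns $J_1$, and in your scheme there is no reduction from $J_s$ to $J_1$ to which it could be applied.

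The paper sidesteps all of this with one extra idea you are missing: split $\cM$ into at most $\gcd(s,p-1)\le s$ cosets $\cM_j$ of the subgroup of $s$-th powers, so that each $m\in\cM_j$ writes as $a_j n^s$ with $n\in\cN_j$. On a single coset the congruence becomes $(n_1/x_1)^s\equiv(n_2/x_2)^s$, and the character expression is $\frac{1}{p-1}\sum_\chi |S_{\cN_j}(\chi^s)|^2|S_{L+\cH}(\chi^s)|^2$ with $\chi^s$ on \emph{both} factors. Now the substitution $\psi=\chi^s$ (each $\psi$ hit at most $s$ times) gives $J_s(L,\cH,\cM_j)\le s\,J(L,\cH,\cN_j)$, after which Lemmas~\ref{lem: sliding to initial int} and~\ref{lem: BS-Energy} apply as pure black boxes to each $\cN_j$, and summing over the $O_s(1)$ cosets finishes the proof.
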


\begin{proof} 
For $x,y\in\F_p^*$  we say that $x$ and $y$  equivalent if $xy^{-1}$ is $s$-th power modulo $p$. 
With respect to this  equivalence relation, we can split the set $\cM$ into  $t \le \gcd(s,p-1)$ equivalence  
classes $\cM_1,\ldots,\cM_t$, so that for any two elements
from the same class their ratio is an $s$-th power modulo $p$. Then,
\begin{equation}
\label{eqn: JsM is the sum of JsMj}
J_s(L, \cH,\cM)=\sum_{j=1}^{t} J_s(L, \cH,\cM_j).
\end{equation}

Clearly, for any $j=1, \ldots, t$, there exists an integer $a_j$ and a set $\cN_j \subseteq \F_p^*$  such that
$$
\cM_j =\{a_jn^s: \, n\in\cN_j\},\quad \mand \quad \#\cN_j=\#\cM_j=M_j.
$$
Hence, 
\begin{equation}
\label{eqn: Mj to Nj}
J_s(L, \cH,\cM_j)=J_s(L, \cH,\cN_j),
\end{equation}
where $J_s(\cH,\cN_j)$ is the number of solutions to the congruence
$$
n_1^sx_1^{-s}n_2^{-s}x_2^s\equiv 1\mod  p,\quad n_1,n_2\in\cN_j,\ x_1,x_2\in L+ \cH.
$$
Expressing the number of solution to the congruence via multiplicative characters, see~\cite[Chapter~3]{IwKow}, we obtain 
\begin{align*}
J_s(L, \cH,\cN_j) & = \frac{1}{p-1} \sum_{\chi} \left| \sum_{n\in \cN_j} \chi\(n^s\) \right|^2  
\left| \sum_{x\in L+\cH} \chi\(x^s\)\right|^2 \\
& = \frac{1}{p-1} \sum_{\chi } \left| \sum_{n\in \cN_j} \chi^s\(n\) \right|^2  
\left| \sum_{x\in L+\cH} \chi^s\(x\)\right|^2, 
\end{align*}
where, as before,  $\chi$ runs through all multiplicative characters of $\F_p^*$.

Since any character can be represented as an  $s$-th power of some other character $\chi$ at most 
$\gcd(s,p-1) \le s$ times, we  continue 
\begin{align*}
J_s(L, \cH,\cN_j) & = \frac{s}{p-1} \sum_{\chi} \left| \sum_{n\in \cN_j} \chi\(n\) \right|^2  
\left| \sum_{x\in L+\cH} \chi\(x\)\right|^2 \\
& =  s J(L, \cH,\cN_j). 
\end{align*}
We now use that $M_j \le M$ and recall  Lemmas~\ref{lem: BS-Energy} and~\ref{lem: sliding to initial int}.
We also observe that the extra term $M^2$ in Lemma~\ref{lem: BS-Energy} 
gets absorbed in other terms already present in the bound of Lemma~\ref{lem: BS-Energy}, which together 
with~\eqref{eqn: JsM is the sum of JsMj} and~\eqref{eqn: Mj to Nj} concludes the proof. 
\end{proof}

\subsection{Additive congruences with reciprocals}
We also need the following statement from~\cite[Proposition ~1]{BG}.

\begin{lemma}
\label{lem: BGsym}
Let $\cX$ be an arbitrary interval~\eqref{eq: int X}  of  cardinality $H$. For any fixed positive integer constants $s$ and $\ell$ the number $J_{\ell,s}(\cX)$ of solutions of the congruence
$$
x_1^{-s}+\ldots+x_{\ell}^{-s}\equiv x_{\ell+1}^{-s}+\ldots+x_{2\ell}^{-s} \mod  p,\quad x_1,\ldots,x_{2\ell}\in \cX,
$$
satisfies the bound
$$
J_{\ell,s}(\cX) < \(H^{2\ell^2/(\ell+1)}+\frac{H^{2\ell}}{p}\)H^{o(1)}.
$$
\end{lemma}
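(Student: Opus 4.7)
The plan is to express $J_{\ell,s}(\cX)$ as a moment of an exponential sum, isolate the trivial $a=0$ contribution, and recast what remains as an additive-energy estimate for the image of $\cX$ under the inversion map $x\mapsto x^{-s}$.

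By the orthogonality of additive characters modulo $p$,
$$
J_{\ell,s}(\cX) = \frac{1}{p}\sum_{a\in\F_p}\left|\sum_{x\in\cX}\e_p(a x^{-s})\right|^{2\ell}.
$$
The $a=0$ term contributes precisely $H^{2\ell}/p$, matching the second summand in the claimed bound. For the remaining $a\in\F_p^*$ I would re-expand via orthogonality and observe that, since the map $x\mapsto x^{-s}$ is at most $\gcd(s,p-1)\le s$ to one, the contribution is $O_s(1)$ times the additive energy $E_\ell(\cY)$ of the set $\cY=\{x^{-s}\bmod p:x\in\cX\}$, which has $\#\cY\ll H$.

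The core task is therefore to prove $E_\ell(\cY)\ll H^{2\ell^2/(\ell+1)+o(1)}$. The trivial bound is $E_\ell(\cY)\le H^{2\ell-1}$, so one needs to save a factor $H^{(\ell-1)/(\ell+1)}$, reflecting the rigidity of the curve $y=x^{-s}$. I would obtain this via a H{\"o}lder-plus-incidence scheme. Writing $E_\ell(\cY)$ as an $L^{2\ell}$-moment of the $\ell$-fold convolution $(\mathbf{1}_\cY)^{*\ell}$ and applying H{\"o}lder with optimally chosen exponents reduces the task to an incidence count: the number of $(\ell+1)$-tuples $(x_0,\ldots,x_\ell)\in\cX^{\ell+1}$ lying on the hypersurface $\sum_{i=0}^{\ell} c_i x_i^{-s}\equiv\lambda\pmod{p}$ for fixed $(c_0,\ldots,c_\ell,\lambda)$. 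The base case is a standard Kloosterman-type bound for $x^{-s}+cy^{-s}\equiv\lambda$ with $x,y$ in an interval, obtained by completion and Weil's bound; the divisor estimate~\eqref{eq: tau} supplies the $H^{o(1)}$ loss.

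The main obstacle is organising the H{\"o}lder iteration so that the savings compound exactly to the sharp exponent $2\ell^2/(\ell+1)$, which requires balancing the H{\"o}lder exponents optimally at each level. Uniformity in the shift $L$ is essentially free because the incidence count is translation invariant under a common shift of all the $x_i$, so only the length $H$ of $\cX$ enters the final bound. Combining the resulting energy estimate with the $a=0$ contribution $H^{2\ell}/p$ yields the claim.
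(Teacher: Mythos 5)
Note first that the paper offers no proof of this lemma: it is imported verbatim as \cite[Proposition~1]{BG}, where it is a main technical result established over several pages. So the only question is whether your sketch stands on its own, and it does not.

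There are two genuine gaps. (i) The reduction is unsound as stated: after discarding the $a=0$ term you declare the core task to be $E_\ell(\cY)\ll H^{2\ell^2/(\ell+1)+o(1)}$, with no $H^{2\ell}/p$ term. For $\ell\ge 2$ this target is false once $H>p^{(\ell+1)/(2\ell)}$: since $\#\cY\ge H/s$, Cauchy--Schwarz gives $E_\ell(\cY)\ge(\#\cY)^{2\ell}/p\gg_s H^{2\ell}/p$, which exceeds $H^{2\ell^2/(\ell+1)}$ in that range. Splitting off $a=0$ does not remove the main term from the additive energy of $\cY$; the correct target for $E_\ell(\cY)$ is the full bound $(H^{2\ell^2/(\ell+1)}+H^{2\ell}/p)H^{o(1)}$, i.e.\ the lemma itself, so the exponential-sum decomposition buys nothing. (ii) More seriously, the entire content of the lemma --- the exponent $2\ell^2/(\ell+1)$ --- is delegated to an unspecified ``H\"older-plus-incidence scheme'' whose organisation you yourself identify as the main obstacle. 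Worse, the proposed base case, completion plus Weil for $x^{-s}+cy^{-s}\equiv\lambda\pmod p$ with $x,y\in\cX$, yields a count of $H^2/p+O(p^{1/2}\log^2p)$, which is weaker than the trivial bound $O_s(H)$ throughout the range $H\le p^{1/2}$ --- precisely where $H^{2\ell^2/(\ell+1)}$ is the dominant term and the lemma has content. The published proof in \cite{BG} is instead an inductive, essentially elementary argument (reducing differences of reciprocals to multiplicative congruences and exploiting divisor-type bounds), none of which is reproduced or replaced here. As it stands the proposal is a plan whose critical step is missing and whose stated fallback tool cannot supply it.
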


\section{Proof of main results} 
\subsection{Proof of Theorem~\ref{thm: HM}}

We choose an integer $r\ge$ such that $1/r < \varepsilon/6$ and 
set 
$$
J =  \fl{p^{1/r}} \mand K =\fl{H/J}.
$$
Then
$$
\cH \cM \supseteq \cJ \cK \cM,
$$
where $\cJ$ and  $\cK$ are as in Lemma~\ref{lem:TripleProd}. 

We now show that $J$, $K$  and $M$ satisfy the conditions of 
 Lemma~\ref{lem:TripleProd}. 

Indeed if 
$$
H<p^{2/3},  \qquad M \ge p^{1/3}, \qquad  HM^{1/4} \ge p^{3/4+\varepsilon} .
$$
then obviously 
$$
K<p^{2/3},  \qquad M \ge p^{1/3}, \qquad  KM^{1/4} \gg p^{3/4+\varepsilon-1/r}
\ge  p^{3/4}
$$
provided that $p$ is large enough. 

Now assume that 
$$
H\ge p^{2/3},  \qquad HM \ge p^{1+\varepsilon} 
$$
and consider two cases. 
If $K \ge p^{2/3}$ then we also have 
$$
KM^{1/4} \gg p^{3/4+\varepsilon-1/r}\ge p^{3/4},
$$
provided that $p$ is large enough. 

However, if $K <  p^{2/3}$ then $H \le p^{2/3 + 1/r}$. Hence, 
$$M \ge p^{1+\varepsilon} H^{-1} \gg  p^{1/3+\varepsilon-1/r}. 
$$
Using $H \ge p^{2/3}$, we obtain $K \ge p^{2/3 -1/r}$ and thus 
$$
KM^{1/4} \gg   p^{2/3 -1/r}  \(p^{1/3+\varepsilon-1/r}\)^{1/4} 
= p^{3/4 + \varepsilon/4 - 5/4r}  \ge p^{3/4}
$$
(again assuming that  $p$ is large enough). 

We note that since $M \le p$ we always have $H \ge p^{1/2}$ and thus taking $r$ 
to be large enough, we obtain $K \ge p^{7/16}$ 

The above verifies the conditions of Lemma~\ref{lem:TripleProd} and thus we get 
\begin{align*}
R(\cJ, \cK, \cM) & = \frac{J^2K^2 M^2} {p-1} \( 1+ O\( K^{-1/r}  p^{3/(8r)+ o(1)}\)\)\\
& =  \frac{J^2K^2 M^2} {p-1} \( 1+ O\(  p^{-\eta}\)\), 
 \end{align*}
with some $\eta > 0$, which depends only on $\varepsilon$. 
Hence, a standard argument, based on  Cauchy's inequality, 
shows that 
$$
\# \(\cJ \cK \cM\) \ge \frac{J^2K^2 M^2}{ R(\cJ, \cK, \cM) } = p + O\(  p^{1-\eta}\), 
$$
which concludes the proof.

\subsection{Proof of Theorem~\ref{thm: KloostFrac}}
Let $\cX = L+\cH$ and let 
$$
S = \sum_{m\in\cM}\left|\sum_{x\in \cX}\e_p(amx^{-s})\right|
$$
be the sum we would like to estimate. 
Using H\"older's inequality and following a well-known strategy, we have that
$$
|S|^{\ell}= \(\sum_{m\in\cM}\left|\sum_{x \in \cX}\e_p(amx^{-s})\right|\)^{\ell}
 \le M^{\ell-1}\sum_{m\in\cM}\left |\sum_{x \in \cX}\e_p(amx^{-s})\right|^{\ell}.
$$

Hence, there exist complex numbers $\vartheta_m$ with $|\vartheta_m|=1$ such that
$$
|S|^{\ell}\le M^{\ell-1}\sum_{m\in\cM}\vartheta_m \sum_{x_1,\ldots,x_{\ell} \in \cX}\e_p(am(x_1^{-s}+\ldots +x_{\ell}^{-s})).
$$

Denoting by $I_{\lambda}$ the number of solutions of the congruence
$$
x_1^{-s}+\ldots+x_{\ell}^{-s}\equiv \lambda\mod  p,\quad x_1,\ldots,x_{\ell} \in \cX,
$$
we get that
$$
|S|^{\ell}\le M^{\ell-1}\sum_{\lambda=0}^{p-1}I_{\lambda}\left|\sum_{m\in\cM}\vartheta_m \e_p(am\lambda)\right|.
$$
Applying  Cauchy's inequality, we obtain 
$$
|S|^{2\ell}\le M^{2\ell-2} \(\sum_{\lambda=0}^{p-1}I_{\lambda}^2\)\sum_{\lambda=0}^{p-1}\left|\sum_{m\in\cM}\vartheta_m \e_p(am\lambda)\right|^2.
$$
Since  
\begin{align*}
\sum_{\lambda=0}^{p-1}&\left|\sum_{m\in\cM}\vartheta_m \e_p(am\lambda)\right|^2\\
&\qquad = \sum_{m_1,m_2\in\cM}\vartheta_{m_1} \overline \vartheta_{m_2} \sum_{\lambda=0}^{p-1} \e_p(a(m_1-m_2)\lambda)
=pM
 \end{align*}  
and 
\begin{align*}
\sum_{\lambda=0}^{p-1}I_{\lambda}^2 = \# \{
x_1^{-s}+\ldots +x_{\ell}^s \equiv x_{\ell+1}^{-s}+\ldots &+x_{2\ell}^{-s} \mod p:\\
& \qquad  x_1,\ldots,x_{2\ell} \in \cX\},
 \end{align*}
by Lemma~\ref{lem: BGsym}, we see that
 \begin{align*}
|S|^{2\ell}& \le  pM^{2\ell-1}\(H^{2\ell^2/(\ell+1)}+\frac{H^{2\ell}}{p}\)H^{o(1)}\\
& =M^{2\ell}H^{2\ell}\(\frac{p}{MH^{2\ell/(\ell+1)}}+\frac{1}{M}\) H^{o(1)},
 \end{align*}
and the desired result follows.

\subsection{Proof of Theorem~\ref{thm: hexagon}}
Let $\cX_i = L_i + \cH$. 
 Expressing $T_6(\lambda)$ in terms of exponential sums, we obtain
$$
T_6(\lambda)= \frac{1}{p}\sum_{a=0}^{p-1} \prod_{i=1}^6 \(\sum_{m\in \cM_i}\sum_{x\in \cX_i} \e_p(amx^{-s})\)\e_p(-a\lambda).
$$
Separating the term  corresponding to $a=0$ and recalling that $0 \notin \cX_i$, we  see that
\begin{align*}
\left |T_6(\lambda) - \frac{H^6M^6}{p}\right|&\le \frac{1}{p}\sum_{a=1}^{p-1} \prod_{i=1}^6 \left| \sum_{m\in \cM_i}\sum_{x\in \cX_i} \e_p(amx^{-s})\right|\\
& \le  W^4\frac{1}{p}
\sum_{a=0}^{p-1} \prod_{i=1}^2 \left| \sum_{m\in \cM_i}\sum_{x\in \cX_i} \e_p(amx^{-s})\right| , 
\end{align*}
where
$$
W=\max_{\gcd(a,p)=1} \max_{3 \le i \le 6} \left| \sum_{m\in \cM_i}\sum_{x\in \cX_i} \e_p(amx^{-s})\right| . 
$$

Note that by Cauchy's inequality 
\begin{align*}
\frac{1}{p}\sum_{a=0}^{p-1} \prod_{i=1}^2 & \left| \sum_{m\in \cM_i}\sum_{x\in \cX_i} \e_p(amx^{-s})\right| \\
& \le\prod_{i=1}^2  \(\frac{1}{p}\sum_{a=0}^{p-1} \left| \sum_{m\in \cM_i}\sum_{x\in \cX_i} \e_p(amx^{-s})\right| ^{2}  \)^{1/2}\\
& = \sqrt{J_s(L_1, \cH,\cM)   J_s(L_2, \cH,\cM)}.
\end{align*}

Hence,
\begin{equation}
\label{eq: T6WJs}
 \left |T_6(\lambda) - \frac{H^6M^6}{p}\right| \ll \Delta , 
\end{equation}
where 
$$
\Delta  =W^4 \sqrt{J_s(L_1, \cH,\cM)   J_s(L_2, \cH,\cM)}.
$$
Hence, it now remains to estimate $\Delta$.

Assume first that $H>p^{2/3}$. Using the  Weil bound  for exponential sums with rational functions (see, for example~\cite{MorMor})
 and completing technique (see~\cite[Section~12.2]{IwKow}), we have 
$$
\sum_{x\in \cX_i} \e_p(amh^{-s}) \ll p^{1/2} \log p, \qquad i=1,2.
$$
Hence, estimating the sums over $\cM_1$ and $\cM_2$ trivially, we obtain
$$
W\ll M p^{1/2}  \log p.
$$
Therefore, substituting this bound in~\eqref{eq: T6WJs} and using Lemma~\ref{lem: s -> 1},  we obtain
\begin{align*}
\Delta&\ll M^4p^2\(\frac{H^2M^2}{p}+HMp^{o(1)}\)\log^4 p  \\
&\ll \frac{H^6M^6}{p} \(\frac{p^2}{H^4}+\frac{p^3}{H^5M}\)p^{o(1)} \\
&  \ll \frac{H^6M^6}{p}p^{-1/3+o(1)},
\end{align*}
and the result follows in the case $H>p^{2/3}$.

Thus, in what follows we assume that $H<p^{2/3}$. In particular, we have that
\begin{equation}
\label{eq: JsJs}
\begin{split} 
\sqrt{J_s(L_1, \cH,\cM)   J_s(L_2, \cH,\cM)}&\\
\ll \frac{H^2M^2}{p}+\frac{HM^{7/4}}{p^{1/4+o(1)}}&+HMp^{o(1)}+M^2. 
\end{split} 
\end{equation}

Applying Theorem~\ref{thm: KloostFrac} to bound $W$, we get that for any  fixed positive integer $\ell$ we have
$$
\Delta\le \frac{H^6M^6}{p}\times R_{\ell} p^{o(1)},
$$
where
$$
R_{\ell} = \frac{p}{H^2M^2}\(\frac{p}{MH^{2\ell/(\ell+1)}}+\frac{1}{M}\)^{2/\ell} \sqrt{J_s(L_1, \cH,\cM)   J_s(L_2, \cH,\cM)}.
$$
Our aim is to prove that for an appropriate choice of $\ell$, we have $R_{\ell}=O(p^{-\delta})$ for   some $\delta>0$ 
which depends on $\ell$, $s$ and $\varepsilon$. 

We choose $\ell=3$, and since $H<p^{2/3}$, we get that
\begin{equation}
\label{eqn: R is after ell =3}
\begin{split} 
R_{3} &  \ll   \frac{p}{H^2M^2} \(\frac{p}{H^{3/2}M}+\frac{1}{M}\)^{2/3} \sqrt{J_s(L_1, \cH,\cM)   J_s(L_2, \cH,\cM)} \\
& \ll  \frac{p}{H^2M^2} \(\frac{p}{H^{3/2}M}\)^{2/3}\sqrt{J_s(L_1, \cH,\cM)   J_s(L_2, \cH,\cM)}.\end{split} 
\end{equation}

We now consider four cases, which depend on what term in~\eqref{eq: JsJs} dominates.

If 
$$
\sqrt{J_s(L_1, \cH,\cM)   J_s(L_2, \cH,\cM)}\ll \frac{H^2M^2}{p},
$$
then from~\eqref{eqn: R is after ell =3} we get that
$$
R_3  \ll \(\frac{p}{H^{3/2}M}\)^{2/3}
$$
and the result follows in view of 
$$
H^{3/2}M> H^{24/17} M^{11/17}>p^{1+\varepsilon}.
$$ 

Next, let 
$$
\sqrt{J_s(L_1, \cH,\cM)   J_s(L_2, \cH,\cM)} \le  HM^{7/4}p^{-1/4+o(1)}.
$$
Then by~\eqref{eqn: R is after ell =3} 
$$
R_3\le \frac{p}{H^2M^2}\(\frac{p}{H^{3/2}M}\)^{2/3} HM^{7/4}p^{-1/4+o(1)} 
\le  \frac{p^{17/12+o(1)} }{H^2M^{11/12}}, 
$$
and the result follows from the condition $H^{24/17}M^{11/17}>p^{1+\varepsilon}$.

Now, let 
$$
\sqrt{J_s(L_1, \cH,\cM)   J_s(L_2, \cH,\cM)}\le HM p^{o(1)} .
$$
Again by~\eqref{eqn: R is after ell =3},  we have that
$$
R_{3} \ll \frac{p}{H^2M^2}\(\frac{p}{H^{3/2}M}\)^{2/3}HM  p^{o(1)} = \frac{p^{5/3+o(1)} }{H^2M^{5/3}}, 
$$
and the result follows from the assumption $H^{6/5}M>p^{1+\varepsilon}$.

Finally, let
$$
\sqrt{J_s(L_1, \cH,\cM)   J_s(L_2, \cH,\cM)}\ll M^2.
$$
In this case~\eqref{eqn: R is after ell =3}  implies that 
$$
R_{3} \ll \frac{p}{H^2M^2}\(\frac{p}{H^{3/2}M}\)^{2/3}M^2 =  \frac{p^{5/3}}{H^3M^{2/3}}
$$
and the result follows from the condition $H^{9/5}M^{2/5}>p^{1+\varepsilon}$ of our theorem.

\section{Comments}

It is easy to see that our argument immediately implies 
the following more general version of Theorem~\ref{thm: KloostFrac}
for weighted sums.  Namely, let  $\balpha = \{\alpha_m\}_{m \in \cM}$ and $\bbeta = \{\beta_x\}_{x\in \cX}$ be complex numbers 
with $|\beta_x|\le 1$. 
Then for an interval $\cH$  as in~\eqref{eq: init int H}  a set $\cM$ as in~\eqref{eq: set M},  for any fixed positive integer $\ell$ we have
\begin{align*}
\left|\sum_{m\in\cM}\sum_{  x\le L+\cH}\alpha_m\beta_x\e_p(amx^{-s})\right| \\
 \le
 \|\balpha \|_{\ell/(\ell-1)}HM^{1/\ell} &\(\frac{p}{MH^{2\ell/(\ell+1)}}+\frac{1}{M}\)^{1/(2\ell)} p^{o(1)},
\end{align*}
where
$$
 \|\balpha \|_{\ell/(\ell-1)}=\(\sum_{m\in\cM}|\alpha_m|^{\ell/(\ell-1)}\)^{(\ell-1)/\ell}.
$$
In turn this bound can lead to   versions of Theorem~\ref{thm: hexagon}
with $x$ running through various subsets of $L+\cH$.  

We also pose an open question of proving that for any $\eps>0$ there exists some 
$\delta > 0$ such that  for $H \ge p^{1/(4e^{1/2}) + \eps}$ and 
$M\ge p^{1-\delta}$ we have an analogue of Theorem~\ref{thm: HM}
with $\#\(\cH \cM\) = p+o(p)$.

\end{document}